\newtheorem{problem}{Problem}
\newtheorem{theorem}{Theorem}
\title{ Preserving Privacy of Agents in Participatory-Sensing Schemes\\  for Traffic Estimation\vspace{-.1in} \thanks{The work of the authors was supported by a McKenzie Fellowship, ARC grant LP130100605, a grant from Melbourne School of Engineering.}}
\author{Farhad Farokhi and Iman Shames\vspace{-.1in}\thanks{The authors are with the Department of Electrical and Electronic Engineering, the University Melbourne, Parkville, VIC 3010, Australia.}}
\begin{document}
\maketitle

\begin{abstract} A measure of privacy infringement for agents (or participants) {  travelling across a transportation network} in participatory-sensing schemes for traffic estimation is introduced. The measure is defined to be the conditional probability that an external observer assigns to the {  private nodes} {  in the transportation network}, e.g., location of home or office, given all the position measurements that it broadcasts over time. An algorithm for finding an optimal trade-off between the measure of privacy infringement and the expected estimation error, captured by the number of the nodes over which the participant stops broadcasting its position, is proposed. The algorithm searches over a family of policies in which an agent stops transmitting its position measurements if its distance (in terms of the number of hops) to the privacy sensitive node is smaller than a {  prescribed threshold}. Employing such symmetric policies are advantageous in terms of the resources required for implementation and the ease of computation. The results are expanded to more general policies. Further, the effect of the heterogeneity of the population density on the optimal policy is explored. Finally, the relationship between the betweenness measure of centrality and the optimal privacy-preserving policy of the agents is numerically explored.
\end{abstract}

\section{Introduction}
A sharp rise in the number of the networked platforms, such as smart phones and wearable gadgets, has enabled new technologies, such as participatory-sensing schemes, to be commercially viable. In these schemes, agents (or participants)  and their networked devices act as sensing units to estimate a variable of interest. Waze\footnote{\url{https://www.waze.com/}} and Mobile Millennium\footnote{\url{http://traffic.berkeley.edu/}} can be mentioned as examples of commercial and academic products that use participatory-sensing schemes for measuring  the traffic flow in real time. These systems often recruit agents that are willing to provide data (by directly providing reports or letting the sensors on their devices to be remotely used). However, gathering data usually reveals some private information about the agents (e.g., the location of their house or office among many other variables), which might make them opt out of the system or switch off their connected devices (more often than needed). One way to alleviate the privacy related anxieties of the agents is to ``systematically corrupt'' the measurements collected by them. The objective of such corruption is to obfuscate the private information of each agent. A direct link between the intensity of corruption and the quality of the estimate, on the one hand, and the availability of the private information, on the other hand, can be established. This observation is at the core of the literature on differential privacy (see~\cite{Dwork2008,chen2012differentially, kargl2013differential,wang2014entropy, le2012differentially} among other studies), where the responses to statistical queries on random databases are typically corrupted by Laplace noise to protect the privacy of the individuals in the database.

In transportation systems, differential privacy might not be well-suited for preserving the privacy of the agents. To demonstrate this, as an example, consider a scenario in which the networked devices of the agents in a participatory-sensing scheme provide Global Positioning System (GPS) measurements of their position in equidistant intervals of time. From the perspective of a subscribing agents, the privacy infringement is only an issue if the agent is close to a privacy sensitive node\footnote{A rational agent, which is privacy-conscious enough to not want its path to be recorded at all, simply does not participate in the scheme.}, such as its home or office. Therefore, when travelling through most of the network, transmitting accurate measurements of its position is not infringing the privacy of the agent (and thus the addition of the noise is conservative). In addition, if an agent is staying in a certain node for long durations of time (e.g., over night at home or during work hours at office), a simple averaging can significantly reduce the effect of any additive noise on the position measurements and, thus, revealing the position of {  private nodes}. Hence, adding noise to the sensor measurements (at least indiscriminately with respect to the position) to protect the privacy of the agent is neither necessary nor beneficial. Therefore, a different approach is required for protecting the privacy of the agents in which they completely stop broadcasting their position based on their proximity to the {  private nodes}. Such a method can form the basis of a software package that gives rise to the democratization of privacy-preserving tools and measures in transportation networks. The software simply asks for the privacy requirements of the agents and automatically turns off their GPS units if they are closer than an optimally selected level to their {  private nodes}. {  Note that this proposal is not the novel aspect of this paper. Many commercial softwares provide similar (but application-specific) solutions. For instance, Strava\footnote{\url{http://www.strava.com/}} enables users (mainly runners and cyclists) to construct a privacy zone (by specifying a position and radius) in which the GPS location of the user is not reported and is thus not revealed to a third-party\footnote{Thieves have used such measurements to steal valuable bikes. \url{http://www.telegraph.co.uk/technology/news/11372189/Cycling-apps-put-you-at-risk-from-hi-tech-burglars.html}}. Though undoubtedly powerful in preserving privacy of the users over one single trip, such zones might prove ineffective in the long run. For instance, imagine that the same user gets out of its privacy zone in three different locations (on three different occasions). Now, equipped with the knowledge that the user's house  is in the same distance from these three positions (without even knowing that distance), a well-resourced attacker can simply pinpoint the position of the house of the user by triangulation. Therefore, more careful design strategies are required to protect the privacy of the users. This paper provides a \textit{provably privacy-preserving} method for constructing the privacy zones based on the underlying transportation network.}

In this paper, first, a measure of privacy infringement for the agents is introduced. The measure is equal to the conditional probability that an external observer assigns to the {  private nodes} of the agent given all its position measurements over time. By carefully selecting the set of nodes and edges over which the agent does not transmit its position, it can effectively reduce this conditional probability to mislead the maximum likelihood filters that can be used to find the location of the {  private nodes}. However, this can only be achieved at the cost of not reporting the position at some nodes in the network, which would reduce the quality of the estimation provided by the participatory-sensing scheme. An algorithm to find an optimal trade-off between these two competing interests is provided. The algorithm searches over a family of policies in which the agent stops broadcasting its position measurements if its distance in terms of the number of hops to the {  private nodes} is smaller than a level. Employing such symmetric policies are advantageous as they are easy to compute and to implement. This is due to the fact that the agent only needs to count the number of the nodes on its path and does not require a map of the city. However, they have a drawback because an agent that uses a symmetric policy should exclude a larger subgraph from its measurements to achieve the same level of privacy as an agent that does not restrict itself to such policies. The results are subsequently expanded to more general (possibly asymmetric) policies. The effect of the population density is then explored. The impact of the betweenness measure of centrality of the {  private nodes} of an agent on the policy of that agent is numerically explored on a random geometric graph. The betweenness measure of centrality for a node is defined as the number of the shortest paths in the graph that pass through that node divided by the total number of the shortest paths~\cite[p.\,39]{jackson2010social}. Intuitively, a node with a high betweenness measure connects many nodes to each other (and is thus very central). However, a node with a low betweenness measure can be removed from the graph with no devastating consequences. It is observed numerically that, for the node with the maximum betweenness in a random geometric graph, a fairly large exclusion radius should be selected to reduce the measure of the privacy infringement. In addition, by slightly increasing the radius of the exclusion zone, the number of the nodes and edges over which the agent stop its broadcasting its position measurements rapidly increases. The opposite behaviour is witnessed for the node with the minimum betweenness. Finally, the results are demonstrated on the City of Melbourne (the center of the Melbourne metropolitan area).

The rest of the paper is organized as follows. This section is concluded by presenting some useful notations. Section~\ref{sec:problem} introduces the privacy measure and mathematically formulates the problem. Section~\ref{sec:sym} provides an algorithm for finding a symmetric policy that balances between the need for privacy  and the quality of the estimation. Section~\ref{sec:asym} extend the results to asymmetric policies. The effect of the heterogeneity of the population density is explored in Section~\ref{sec:density}. The numerical examples are illustrated and their implications are discussed in Section~\ref{sec:numerical}. Finally, the paper is concluded in Section~\ref{sec:conclusions}.

\subsection{Notations}
Consider an undirected $\mathcal{G}=(\mathcal{V},\mathcal{E})$, where  $\mathcal{V}$ is the vertex set and $\mathcal{E}\subseteq \mathcal{V}\times \mathcal{V}$ is the edge set. An undirected edge between nodes $i,j\in\mathcal{V}$ is denoted by $\{i,j\}$.
A \emph{walk} is a sequence $v_{1},e_{1},v_{2}, e_{2},\ldots,e_{k-1},v_{k}$ of vertices $v_{i}\in\mathcal{G}$ and edges $e_{i}\in\mathcal{E}$ such
that $e_{i}=(v_{i},v_{i+1})$ for all $i<k$.  The vertices $v_{1}$ and $v_{k}$ are called the \emph{initial} and  the \emph{terminal} vertices of the walk, respectively. The \emph{length} of the walk is the number of edges in it.  A \emph{path} is a walk in which all vertices are distinct. The \emph{distance} $d(i,j)$  between two vertices $i,j\in\mathcal{V}$ is the length of a \emph{shortest} path with initial vertex $i$ and terminal vertex $j$. Additionally, define the set of nodes with a distance $\delta$ from node $i$ by $\mathcal{D}_i(\delta)$ where $\mathcal{D}_i(\delta)= \{j|j\in\mathcal{V}, d(i,j)=\delta\}$. The length of the longest shortest path between any two vertices of a graph is the graph's \emph{diameter}.

\section{Problem Formulation} \label{sec:problem}
A {\em{transportation network}}, or simply a network, is modelled by an undirected graph $\mathcal{G}=(\mathcal{V},\mathcal{E})$, { where the nodes and the edges represent the intersections and the road segments, respectively}. Consider a set of mobile agents, denoted by $A$, that traverse throughout the network along its edges. The agents are participating in a passive\footnote{The term passive refers to the fact that the agents are not required to report traffic incidents (e.g., as in Waze). The participatory-sensing scheme receives GPS measurements of the agents at different times to construct an estimate of the traffic flow. An example of such a system can be found in~\cite{Herrera2010568}. } participatory-sensing scheme to estimate the state of the traffic (i.e., the number of the vehicles\footnote{Note that there might be many more vehicles than agents since not everyone is participating in the sensing scheme.} on each edge) in the network. Each agent $a\in A$ is assumed to visit all nodes in $\mathcal{V}$ at least once\footnote{For this assumption to be satisfied, it is only required that the agents visit any node by a non-zero probability (no matter how small). Then, as time goes to infinity, the agents visit all nodes with probability one. Removing this assumption can make the observer's task at determining the private nodes of an agent harder. Therefore, this analysis can be used as a worst-case analysis of the privacy preserving policies for the agents. }.
Any agent $a\in A$ can choose whether to broadcast its position or not. In case of deciding not to broadcast the position, it can simply turn off the GPS sensor on its connected device or terminate the participatory-sensing application. Assume that agent $a$ does not want an external observer (with access to all its position measurements) to be able to determine if it has ever visited a prescribed set of nodes denoted by $\mathcal{S}_a\subseteq\mathcal{V}$. This set is assumed to be only known by agent $a$. The nodes $s_a\in \mathcal{S}_a$ are termed the \emph{{  private nodes}} for agent $a$. For the sake of the simplicity of exposition, assume that $\mathcal{S}_a=\{s_a\}$ for some $s_a\in\mathcal{V}$.
The results of this paper can be easily generalized  to the case that $\mathcal{S}_a$ is not a singleton (so long as the elements of $\mathcal{S}_a$ are ``far enough'' from each other on the graph).

Assume that the observer has a uniform prior on privacy sensitive node $s_a$ of agent $a$. This assumption is removed later in the paper to account for the heterogeneity of the population density. Given the positions that the agent has broadcast (and the time stamp of those positions), the observer can construct the conditional probability $p_a(v)$ that node $v$ belongs to the set $\mathcal{S}_a$. Agent $a\in A$ wants to keep the conditional probability $\pi_a:=p_a(s_a)$ for $s_a\in\mathcal{S}_a$ small. This way, the observer does not have a good chance for correctly inferring that the agent has visited the node $s_a$. Therefore, this conditional probability can be seen as a measure of privacy infringement for agent $a$. However, the agents want to achieve this goal with not drastically degrading the performance of the participatory-sensing scheme. Note that, if such a constraint is not enforced, the best policy of an agent is to never broadcast its position.

Throughout this paper, agent $a$ is assumed to follow a policy that instructs it to stop broadcasting its position when it is on a link $\{i,j\}$ if $d(i,s_a)\leq h_a$ or $d(j,s_a)\leq h_a$ for $s_a\in\mathcal{S}_a$, where $h_a\in\mathbb{N}$ is a prescribed integer number that is only known by agent $a$. The goal is to find $h_a$ optimally.  Such a policy is favoured (in comparison to more general asymmetric ones) as it can be easily implemented: it does not require a map of the city. The agent simply stops broadcasting when its hop-distance to its private node, $s_a$, is smaller than $h_a$.

As described earlier, the agents do not want to drastically degrade the quality of the participatory-sensing scheme.
Let $\mathcal{E}(s_a,h_a)$ denote the set of edges over which agent $a$ stop broadcasting its position. It can be shown that
\begin{align*}
\mathcal{E}(s_a,h_a)=\{\{i,j\}\in\mathcal{E}\,|\,d(i,s_a)\leq h_a \vee d(j,s_a)\leq h_a\}.
\end{align*}
This is inversely proportional to the quality of the estimation as, in many sensing schemes (e.g., the least mean square method), the covariance of the estimation error reduces by increasing the number of the measurements. Therefore, agent~$a$ may wish to keep $r_a:=|\mathcal{E}(s_a,h_a)|$ relatively small.

\begin{problem} \label{prob:1} For any agent $a\in A$, find $h_a$ to minimize $\pi_a+\gamma_a r_a$, where the constant $\gamma_a>0$ is inversely proportional to the importance of privacy for agent $a$.
\end{problem}

For very large constant $\gamma_a$, the cost of the agent behaves similarly to $r_a$. Therefore, the best policy of the agent is to report on all edges and, thus, select $h_a=0$. However, for very small $\gamma$, the cost of the agent is mostly determined by $\pi_a$. Thus, the best policy of the agent is  to never broadcast its position or, equivalently, set $h_a$ to be larger than the diameter of $\mathcal{G}$. Alternatively, the agent can solve the following problem. This problem formulation avoids the use of a non-intuitive parameter $\gamma_a$.

\begin{problem} \label{prob:2} For any agent $a\in A$, find $h_a$ to minimize $r_a$ subject to $\pi_a\leq \xi_a$, where the constant $\xi_a\in[0,1]$.
\end{problem}

If $\xi_a\leq 1/|\mathcal{V}|$, the solution of Problem~\ref{prob:2} is to select $h_a$ larger than the diameter of $\mathcal{G}$. This is because, upon providing any position measurements, the conditional probability $\pi_a$ can only be increased (i.e., only private information can be leaked by providing measurements). For $\xi_a=1$,  the best policy of the agent is to select $h_a=0$. In this case, the agent has a low sensitivity to privacy infringement. A good aspect of this problem formulation is that the constant $\xi_a$ can be intuitively selected based on the meaning of the maximum tolerable conditional probability $\pi_a$.

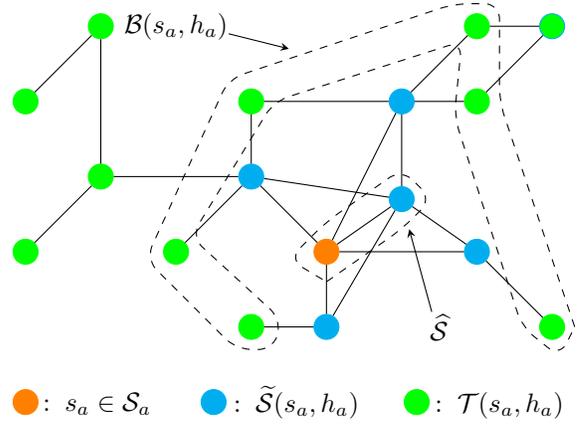
\begin{figure}
\centering
\begin{tikzpicture}
\node[draw,circle,color=cyan  ,minimum size=0.05,fill=cyan  ] (1) at (0,0) {};
\node[draw,circle,color=cyan  ,minimum size=0.05,fill=green ] (2) at (3,4) {};
\node[draw,circle,color=cyan  ,minimum size=0.05,fill=cyan  ] (3) at (2,1) {};
\node[draw,circle,color=orange,minimum size=0.05,fill=orange] (4) at (0,1) {};
\node[draw,circle,color=cyan  ,minimum size=0.05,fill=cyan  ] (5) at (1,3) {};
\node[draw,circle,color=green ,minimum size=0.05,fill=green ] (6) at (3,0) {};
\node[draw,circle,color=green ,minimum size=0.05,fill=green ] (7) at (-4,3) {};
\node[draw,circle,color=green ,minimum size=0.05,fill=green ] (8) at (-3,4) {};
\node[draw,circle,color=cyan  ,minimum size=0.05,fill=cyan  ] (9) at (-1,2) {};
\node[draw,circle,color=green ,minimum size=0.05,fill=green ] (10) at (-3,2) {};
\node[draw,circle,color=green ,minimum size=0.05,fill=green ] (11) at (-1,0) {};
\node[draw,circle,color=green ,minimum size=0.05,fill=green ] (12) at (-4,1) {};
\node[draw,circle,color=green ,minimum size=0.05,fill=green ] (13) at (-1,3) {};
\node[draw,circle,color=green ,minimum size=0.05,fill=green ] (14) at (-2,1) {};
\node[draw,circle,color=green ,minimum size=0.05,fill=green ] (15) at (2,3) {};
\node[draw,circle,color=green ,minimum size=0.05,fill=green ] (16) at (2,4) {};
\node[draw,circle,color=cyan  ,minimum size=0.05,fill=cyan  ] (17) at (1,1.7) {};
\draw[-] (1)  -- (4) ;
\draw[-] (1)  -- (11);
\draw[-] (3)  -- (4) ;
\draw[-] (5)  -- (4) ;
\draw[-] (9)  -- (4) ;
\draw[-] (9)  -- (14);
\draw[-] (9)  -- (10);
\draw[-] (10) -- (12);
\draw[-] (10) -- (8) ;
\draw[-] (8)  -- (7) ;
\draw[-] (5)  -- (13);
\draw[-] (13) -- (9) ;
\draw[-] (3)  -- (6) ;
\draw[-] (15) -- (5) ;
\draw[-] (16) -- (5) ;
\draw[-] (2)  -- (15);
\draw[-] (2)  -- (16);
\draw[-] (17) -- (4) ;
\draw[-] (17) -- (1) ;
\draw[-] (17) -- (3) ;
\draw[-] (17) -- (5) ;
\draw[-] (17) -- (9) ;
\draw[dashed,rounded corners=0.2cm]
						   (+3.3,+0.0)
						-- (+2.3,+3.0)
					    -- (+2.3,+4.3)
					    -- (+1.7,+4.3)
					    -- (-1.3,+3.3)
					    -- (-2.4,+0.9)
					    -- (-1.2,-0.3)
					    -- (-0.7,-0.3)
					    -- (-0.6,+0.1)
					    -- (-1.8,+1.2)
					    -- (-0.7,+3.0)
					    -- (+1.8,+3.8)
					    -- (+1.7,+2.8)
					    -- (+2.6,+0.0)
					    -- (+3.1,-0.4)
					    -- (+3.3,+0.0);
\node[] at (-2.0,+4.0) {$\mathcal{B}(s_a,h_a)$};
\draw[->,>=stealth] (-1.3,+3.9) -- (-0.5,+3.7);
\draw[dashed,rounded corners=0.2cm]
						   (+0.0,+0.6)
						-- (+1.0,+1.3)
						-- (+1.4,+1.7)
						-- (+1.0,+2.1)
						-- (+0.0,+1.4)
						-- (-0.5,+0.9)
						-- (+0.0,+0.6);
\node[] at (+1.5,-0.0) {$\widehat{\mathcal{S}}$};
\draw[->,>=stealth] (+1.4,+0.2) -- (+1.1,+1.3);
\node[draw,circle,color=orange,minimum size=0.05,fill=orange] at (-4,-1) {};
\node[] at (-3.05,-1.02) {$:\;s_a\in\mathcal{S}_a$};
\node[draw,circle,color=cyan ,minimum size=0.05,fill=cyan ] at (-1.5,-1) {};
\node[] at (-0.4,-1.00) {$:\;\widetilde{\mathcal{S}}(s_a,h_a)$};
\node[draw,circle,color=green,minimum size=0.05,fill=green] at (+1.2,-1) {};
\node[] at (+2.3,-1.04) {$:\;\mathcal{T}(s_a,h_a)$};
\end{tikzpicture}
\caption{\label{fig:1} An illustrative example of the sets $\mathcal{S}_a$, $\mathcal{T}(s_a,h_a)$, $\widetilde{\mathcal{S}}(s_a,h_a)$, $\mathcal{B}(s_a,h_a)$, and $\widehat{\mathcal{S}}$ with $h_a=1$.
}
\end{figure}

\section{Privacy-Preserving Policy} \label{sec:sym}
In the light of the aforementioned problem formulation, an external observer that collects the position broadcasts of agent $a$ can form the set of nodes $\mathcal{T}(s_a,h_a)\subseteq \mathcal{V}$ in which agent $a$ has transmitted its position. This is because it is assumed that each agent visits all the nodes in the network. If the observer waits long enough, the set of all the nodes that agent $a$ has visited converges to $\mathcal{T}(s_a,h_a)$ with probability one. Note that
\begin{align}
\mathcal{T}(s_a,h_a)=\{i\in\mathcal{V}\,|\,d(i,s_a)\geq h_a+1\}.
\end{align}
See Fig.~\ref{fig:1} for an illustrative example of this set.
The following immediately follows:
\begin{align}
s_a \in \widetilde{\mathcal{S}}(s_a,h_a):=\mathcal{V}\setminus\mathcal{T}(s_a,h_a).
\end{align}
This is a direct consequence of the selected set of policies.
However, due to the nature of the policy of the agents, the observer can further remove points from $\widetilde{\mathcal{S}}(s_a,h_a)$ that cannot possibly belong to $\mathcal{S}_a$. This is investigated in the remainder of this section.
The boundary of the set $\mathcal{T}(s_a,h_a)$ interfacing with the set $\widetilde{\mathcal{S}}(s_a,h_a)$ can be defined as
\begin{align}
\mathcal{B}(s_a,h_a) = \{i\in\mathcal{T}(s_a,h_a)\,|\,  &\mathcal{D}_i(1)\cap  \widetilde{\mathcal{S}}(s_a,h_a) \neq \emptyset  \},
\end{align}
where, for any $m\in\mathbb{N}$ and any $i\in\mathcal{V}$, the set $\mathcal{D}_i(m)$ denotes the set of nodes $j\in\mathcal{V}$ such that $d(i,j)=m$ (i.e., that are of distance $m$ to node $i$).  Similarly, define
\begin{align}\label{eq:set}
\widehat{\mathcal{S}}
  :=&\bigg\{s \in \widetilde{\mathcal{S}}(s_a,h_a)\,|\,\exists \delta\in\mathbb{N}_{[1,\ell_a+1]}:\mathcal{D}_s(\delta)= \mathcal{B}(s_a,h_a)\nonumber\\
  &\hspace{.6in} \wedge \bigcup_{0\leq \delta'\leq \delta-1}\mathcal{D}_s(\delta')= \widetilde{\mathcal{S}}(s_a,h_a)
  \bigg\}.
\end{align}
where $\ell_a$ is the diameter of the subgraph induced by $\widetilde{\mathcal{S}}(s_a,h_a)$. 
Among all the subsets of $\mathcal{V}$ that can be picked by the observer, $\widehat{\mathcal{S}}$ described by \eqref{eq:set} is the smallest set that is guaranteed to include $s_a$. The following results can be proved.

\begin{theorem} \label{tho:1} $\pi_a=1/|\widehat{\mathcal{S}}|$.
\end{theorem}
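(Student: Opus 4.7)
The plan is to show that, asymptotically, the observer can reconstruct $\mathcal{T}(s_a,h_a)$, hence also $\widetilde{\mathcal{S}}(s_a,h_a)$ and $\mathcal{B}(s_a,h_a)$, and that the posterior over the private node then collapses to the uniform distribution on $\widehat{\mathcal{S}}$. The latter is done by characterising $\widehat{\mathcal{S}}$ as the set of nodes $v\in\mathcal{V}$ that are \emph{observationally indistinguishable} from $s_a$ under the symmetric policy family, i.e.\ the $v$ for which some threshold $h_v\in\mathbb{N}$ makes the policy $(v,h_v)$ produce exactly the observed broadcast set.

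First I would use the assumption that every agent visits every node at least once (with probability one in the limit) to conclude that the observer eventually knows $\mathcal{T}(s_a,h_a)$ exactly, and therefore also knows $\widetilde{\mathcal{S}}(s_a,h_a)$ and the boundary $\mathcal{B}(s_a,h_a)$ (the latter being a deterministic function of the former two). Given the symmetric-policy structure, the posterior over $s_a$ is supported exactly on nodes $v$ consistent with this broadcast set, i.e.\ on the set of $v$ for which there exists $h$ with $\{i\in\mathcal{V}:d(i,v)\le h\}=\widetilde{\mathcal{S}}(s_a,h_a)$.

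Next I would show that this consistency condition is equivalent to $v\in\widehat{\mathcal{S}}$. Writing $\delta=h+1$, the ball equality above is precisely $\bigcup_{0\le\delta'\le\delta-1}\mathcal{D}_v(\delta')=\widetilde{\mathcal{S}}(s_a,h_a)$, which is the first clause in \eqref{eq:set}. The second clause $\mathcal{D}_v(\delta)=\mathcal{B}(s_a,h_a)$ is then automatic: any $i\in\mathcal{B}(s_a,h_a)$ is in $\mathcal{T}(s_a,h_a)$ and adjacent to some $j\in\widetilde{\mathcal{S}}(s_a,h_a)$, so by the triangle inequality $d(i,v)\le d(j,v)+1\le\delta$ while $d(i,v)\ge\delta$, forcing $i\in\mathcal{D}_v(\delta)$; conversely any $i\in\mathcal{D}_v(\delta)$ is adjacent to $\mathcal{D}_v(\delta-1)\subseteq\widetilde{\mathcal{S}}(s_a,h_a)$ and itself lies in $\mathcal{T}(s_a,h_a)$. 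The restriction $\delta\in\mathbb{N}_{[1,\ell_a+1]}$ is consistent because any valid candidate has all of $\widetilde{\mathcal{S}}(s_a,h_a)$ within its ball of radius $\ell_a$. In particular, taking $v=s_a$ and $\delta=h_a+1$ verifies that $s_a\in\widehat{\mathcal{S}}$, so the set is non-empty.

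Finally I would combine these two facts with Bayes' rule. Under the uniform prior on $s_a$, the posterior is proportional to the likelihood, and since the observations are deterministic functions of $(v,h)$, the likelihood is indicator of consistency; moreover, for each candidate $v\in\widehat{\mathcal{S}}$ the consistent threshold is unique (the balls $\{i:d(i,v)\le h\}$ are nested strictly in $h$ until they stabilise). Hence every $v\in\widehat{\mathcal{S}}$ receives identical posterior mass, giving $p_a(v)=1/|\widehat{\mathcal{S}}|$ for every such $v$, and in particular $\pi_a=p_a(s_a)=1/|\widehat{\mathcal{S}}|$. The main obstacle is the characterisation of $\widehat{\mathcal{S}}$ in the second step: one must verify that the seemingly-redundant boundary clause in \eqref{eq:set} really is implied by the ball clause and that no $v\notin\widehat{\mathcal{S}}$ can be consistent under any threshold; the rest is essentially bookkeeping with the graph metric.
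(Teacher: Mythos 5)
Your proposal is correct and follows essentially the same route as the paper: Bayes' rule with the uniform prior, the likelihood reduced to the indicator that some threshold $h$ reproduces the observed broadcast set, identification of that consistency condition with membership in $\widehat{\mathcal{S}}$, and normalization to $1/|\widehat{\mathcal{S}}|$. Your additional verification that the boundary clause in \eqref{eq:set} is implied by the ball clause is a detail the paper leaves implicit, but it does not change the argument.
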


\begin{proof} The Bayes' rule (e.g., see~\cite{durrett2010probability}) dictates that
\begin{align}
\mathbb{P}\{s\in\mathcal{S}_a\,|\,\mathcal{T}(s_a,h_a)\}
\propto&\mathbb{P}\{\mathcal{T}(s_a,h_a)\,|\,s\in\mathcal{S}_a\}\mathbb{P}\{s\in\mathcal{S}_a\}\nonumber\\
\propto&\mathbb{P}\{\mathcal{T}(s_a,h_a)\,|\,s\in\mathcal{S}_a\},\label{eqn:prob:1}
\end{align}
where the notation $\propto$ shows that the both sides are proportional to each other (i.e., they are equal to each other if one is multiplied by an appropriate constant). Now, note that
\begin{align*}
\mathbb{P}\{\mathcal{T}(s_a,h_a)\,|\,&s\in\mathcal{S}_a\}\\
&=\mathbbm{1}_{\exists h\in\mathbb{N}:\mathcal{T}(s,h)=\mathcal{T}(s_a,h_a)}\\
&=\mathbbm{1}_{\exists h\in\mathbb{N}:\mathcal{B}(s,h)=\mathcal{B}(s_a,h_a)\wedge\widetilde{\mathcal{S}}(s_a,h_a)=\widetilde{\mathcal{S}}(s,h)},
\end{align*}
where $\mathbbm{1}$ is a characteristic function, i.e., $\mathbbm{1}_p$ is equal to one if the statement $p$ holds true and is equal to zero otherwise. By definition,
$\mathcal{B}(s,h)=\mathcal{D}_s(h+1)$ and $\
\widetilde{\mathcal{S}}(s,h)=\bigcup_{h'\in\mathbb{N}:h'\leq h} \mathcal{D}_s(h').$
Therefore, it can be deduced that
\begin{align} \label{eqn:prob:2}
\mathbb{P}\{\mathcal{T}(s_a,h_a)\,|\,&s\in\mathcal{S}_a\}=\mathbbm{1}_{s\in\widehat{\mathcal{S}}}.
\end{align}
Substituting~\eqref{eqn:prob:2} into~\eqref{eqn:prob:1} gives
\begin{align*}
\mathbb{P}\{&s\in\mathcal{S}_a\,|\,\mathcal{T}(s_a,h_a)\}\propto \mathbbm{1}_{s\in\widehat{\mathcal{S}}}.
\end{align*}
Noting that $\sum_{s\in\mathcal{V}}\mathbb{P}\{s\in\mathcal{S}_a\,|\,\mathcal{T}(s_a,h_a)\}=1$ results in
\begin{align*}
\mathbb{P}\{s\in\mathcal{S}_a\,|\,\mathcal{T}(s_a,h_a)\}=\frac{1}{|\widehat{\mathcal{S}}|}\mathbbm{1}_{s\in\widehat{\mathcal{S}}}.
\end{align*}
This concludes the proof.
\end{proof}

In this case, it can be proved that $\mathcal{E}(s_a,h_a)=\{\{i,j\}\in\mathcal{E}\,|\,i\in \widetilde{\mathcal{S}}(s_a,h_a) \vee j\in \widetilde{\mathcal{S}}(s_a,h_a)\}.$
Hence, Problem~\ref{prob:1} can be cast as
\begin{align*}
\min_{0\leq h_a\leq d_\mathcal{G}} \frac{1}{|\widehat{\mathcal{S}}|}\hspace{-.04in}+\hspace{-.04in}\gamma_a |\hspace{-.02in}\{\hspace{-.02in}\{i,j\}\in\mathcal{E}|i\in \widetilde{\mathcal{S}}(s_a,h_a) \hspace{-.04in}\vee\hspace{-.04in} j\in \widetilde{\mathcal{S}}(s_a,h_a)\hspace{-.02in}\}\hspace{-.02in}|,
\end{align*}
where $d_\mathcal{G}$ denotes the diameter of the graph $\mathcal{G}$. For each $s_a$, this problem can be easily solved by calculating the cost function for all applicable $h_a$ and, subsequently, by selecting $h_a$ corresponding to the smallest value. To calculate the cost, sets $\mathcal{D}_i(\delta)$ for all $i\in\mathcal{V}$ and $\delta$ need to be calculated. These sets can be determined by calculating $\Delta^\delta$ with $\Delta$ denoting the adjacency matrix, i.e., $\Delta_{ij}=1$ if there exists an edge $\{i,j\}\in\mathcal{G}$. All these sets for $\delta$ upto $d_\mathcal{G}$ , can be computed by $\mathcal{O}(|\mathcal{V}|^3d_\mathcal{G})$ operations. Noting that, at worst case $d_\mathcal{G}=\mathcal{O}(|\mathcal{V}|)$, all these sets can be computed by $\mathcal{O}(|\mathcal{V}|^4)$. Therefore, the set $\widehat{\mathcal{S}}$ can be computed by $\mathcal{O}(|\mathcal{V}|^6)$ operations because, at worst case, $|\widetilde{\mathcal{S}}(s_a,h_a)|=\mathcal{O}(|\mathcal{V}|)$. Finally, to perform all these operations for all $h_a$, at most $\mathcal{O}(|\mathcal{V}|^7)$ operations are required. These complexity calculations are very conservative and, for most graphs, much fewer operations are required.

Alternatively, Problem~\ref{prob:2} can be cast as
\begin{align*}
\min_{0\leq h_a\leq d_\mathcal{G}} &|\{\{i,j\}\in\mathcal{E}\,|\,i\in \widetilde{\mathcal{S}}(s_a,h_a) \vee j\in \widetilde{\mathcal{S}}(s_a,h_a)\}|,\\
\mathrm{s.t.}\hspace{.13in}& |\widehat{\mathcal{S}}|\geq 1/\xi_a
\end{align*}
Because $|\{\{i,j\}\in\mathcal{E}\,|\,i\in \widetilde{\mathcal{S}}(s_a,h_a) \vee j\in \widetilde{\mathcal{S}}(s_a,h_a)\}|$ is an increasing function of $h_a$, the optimal solution is to select $h_a$ to be the smallest element of the set $\{h_a\,|\,|\widehat{\mathcal{S}}|\geq 1/\xi_a\}$. This can also be done by checking all $h_a$ with at most $\mathcal{O}(|\mathcal{V}|^7)$ operations.

\section{Asymmetric Policies} \label{sec:asym}
In the previous section, the only viable family of policies for agent $a$ is to stop transmitting its position measurements if its distance in terms of the number of hops to the privacy sensitive node $s_a$ is smaller than or equal to $h_a$. The symmetry of the policy of the agents was utilized by the observer to further reduce the uncertainty  of predicting the privacy sensitive node of the agents. This is clearly a drawback because, for achieving the same level of privacy, an agent that uses a symmetric policy must exclude a larger subgraph from its measurements. However, employing symmetric policies are also advantageous as they can be computed efficiently and their implementation is simpler than the asymmetric ones (the agent only needs to count the number of the nodes on its path). 

To formalize this intuition, the set of applicable policies of the agents is temporarily generalized. Agent $a$ is assumed to follow a policy that instruct its connected device to stop broadcasting its position when it is on a link $\{i,j\}$ if $i\in\mathcal{N}_a$ or $j\in\mathcal{N}_a$. It is assumed that $s_a\in\mathcal{N}_a$. Such an assumption is needed because the observer can deduce that a node is of special importance to an agent, if it stops there for a significant amount of time (as it is not only a way point on the path). The goal is to find the set $\mathcal{N}_a$ optimally. Similar to the previous section, an external observer that collects the position broadcasts of agent $a$ can form the set of nodes $\mathcal{T}(s_a,h_a)\subseteq \mathcal{V}$ in which agent $a$ has transmitted its position. Clearly, $\mathcal{N}_a=\mathcal{V}\setminus\mathcal{T}(s_a,h_a)$. The following result can be proved.

\begin{theorem} \label{tho:2} $\pi_a=1/|\mathcal{N}_a|$.
\end{theorem}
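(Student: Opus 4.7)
The plan is to run the same Bayesian argument used for Theorem~\ref{tho:1}, but to observe that in the asymmetric setting the observer has strictly less structure to exploit, so the posterior simply collapses to a uniform distribution on $\mathcal{N}_a$. First I would invoke Bayes' rule together with the uniform prior on the private node to reduce the claim to computing the likelihood $\mathbb{P}\{\mathcal{T}(s_a,h_a)\,|\,s\in\mathcal{S}_a\}$ for each candidate $s\in\mathcal{V}$, exactly as in~\eqref{eqn:prob:1}.

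Next I would characterize this likelihood. Because an asymmetric policy is specified by an arbitrary subset $\mathcal{N}\subseteq\mathcal{V}$ with the only constraint being that it contains the private node, the observations $\mathcal{T}(s_a,h_a)=\mathcal{V}\setminus\mathcal{N}_a$ are realizable by an agent with private node $s$ if and only if there exists an admissible $\mathcal{N}$ with $s\in\mathcal{N}$ and $\mathcal{V}\setminus\mathcal{N}=\mathcal{T}(s_a,h_a)$. This forces $\mathcal{N}=\mathcal{N}_a$ and hence is equivalent to $s\in\mathcal{N}_a$. Therefore
\begin{align*}
\mathbb{P}\{\mathcal{T}(s_a,h_a)\,|\,s\in\mathcal{S}_a\}=\mathbbm{1}_{s\in\mathcal{N}_a}.
\end{align*}

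Finally, I would normalize using $\sum_{s\in\mathcal{V}}\mathbb{P}\{s\in\mathcal{S}_a\,|\,\mathcal{T}(s_a,h_a)\}=1$ to conclude that the posterior equals $(1/|\mathcal{N}_a|)\mathbbm{1}_{s\in\mathcal{N}_a}$, and then evaluate at $s=s_a$, which lies in $\mathcal{N}_a$ by the standing assumption $s_a\in\mathcal{N}_a$.

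The only subtle step is justifying that the observer cannot shrink the candidate set below $\mathcal{N}_a$, which is precisely the asymmetry gap with Theorem~\ref{tho:1}. There, the hop-symmetric form of the policy let the observer impose the additional constraint $\mathcal{D}_s(\delta)=\mathcal{B}(s_a,h_a)$ and thus pass from $\widetilde{\mathcal{S}}(s_a,h_a)$ to the typically much smaller $\widehat{\mathcal{S}}$. Here, since $\mathcal{N}_a$ is an unstructured subset of $\mathcal{V}$ from the observer's viewpoint, every $s\in\mathcal{N}_a$ is equally consistent with the broadcasts, and the uniform posterior is unavoidable. I would flag this contrast explicitly, since it explains why asymmetric policies attain better privacy for the same excluded edge set.
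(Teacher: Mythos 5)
Your proposal is correct and follows essentially the same route as the paper's own proof: Bayes' rule with the uniform prior, the observation that $\mathbb{P}\{\mathcal{T}(s_a,h_a)\,|\,s\in\mathcal{S}_a\}=\mathbbm{1}_{s\in\mathcal{N}_a}$, and normalization over $\mathcal{V}$. The only difference is that you spell out the realizability argument behind the indicator (the paper states it without justification) and add the comparison with the symmetric case, both of which are welcome but not a different method.
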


\begin{proof} Similar to the proof of Theorem~\ref{tho:1}, the Bayes' rule can be used to show that
\begin{align*}
\mathbb{P}\{s\in\mathcal{S}_a\,|\,\mathcal{T}(s_a,h_a)\}
\propto&\mathbb{P}\{\mathcal{T}(s_a,h_a)\,|\,s\in\mathcal{S}_a\}\mathbb{P}\{s\in\mathcal{S}_a\}\nonumber\\
\propto&\mathbb{P}\{\mathcal{T}(s_a,h_a)\,|\,s\in\mathcal{S}_a\}.
\end{align*}
Now, note that $\mathbb{P}\{\mathcal{T}(s_a,h_a)\,|\,s\in\mathcal{S}_a\}=\mathbbm{1}_{s\in\mathcal{N}_a}.$
Therefore,
\begin{align*}
\mathbb{P}\{s\in\mathcal{S}_a\,|\,\mathcal{T}(s_a,h_a)\}=\frac{1}{|\mathcal{N}_a|}\mathbbm{1}_{s\in\mathcal{N}_a}.
\end{align*}
This concludes the proof.
\end{proof}

Problem~\ref{prob:1} can then be cast as
\begin{align*}
\min_{\mathcal{N}_a\in 2^{\mathcal{V}}:s_a\in \mathcal{N}_a} \frac{1}{|\mathcal{N}_a|}+\gamma_a |\{\{i,j\}\in\mathcal{E}\,|\,i\in\mathcal{N}_a\vee j\in\mathcal{N}_a\}|.
\end{align*}
Unfortunately, this optimization problem is very hard solve as the number of all the possible sets $\mathcal{N}_a\in 2^{\mathcal{V}}$ such that $s_a\in \mathcal{N}_a$ grows exponentially with the number of the number of the nodes. A similar argument can also be presented for Problem~\ref{prob:2} and is thus removed.

If, in order to reduce the complexity of the problem, the search is conducted on the set of symmetric policies defined to be set of all policies of the form $\mathcal{N}_a=\{s\in\mathcal{V}\,|\,d(s,s_a)\leq h_a\}$ for $h_a\in\mathbb{N}$, the results of the previous section can be recovered. In this case, it can be also proved that
\begin{align*}
\frac{1}{|\mathcal{N}_a|}\leq \frac{1}{|\widehat{\mathcal{S}}|}
\end{align*}
because $\widehat{\mathcal{S}}\subseteq\widetilde{\mathcal{S}}(s_a,h_a)=\mathcal{N}_a$.
Therefore, it can be inferred that the symmetric polices minimize an upper-bound on the costs over a smaller set of policies, i.e., the set of asymmetrical policies. This results in their superior efficiency in terms of computation and implementation.

\section{Varying Density} \label{sec:density}
In most urban areas, the density of the population is not homogeneous throughout the city. Let $\rho:\mathcal{V}\rightarrow\mathbb{R}_{\geq 0}$ be a mapping that determines the density around any node. Following this observation, the observer can no longer assume a uniform prior on the {  private nodes}. Therefore, $\mathbb{P}\{s\in\mathcal{S}_a\}=\rho(s)/\sum_{v\in\mathcal{V}}\rho(v)$. This captures the fact that it is more likely that the privacy sensitive node $s_a$ of agent~$a$ to belong to a densely populated area. In this case, the following result can be proved.

\begin{theorem} \label{tho:3} $\pi_a=\rho(s)/\sum_{v\in\widehat{\mathcal{S}}}\rho(v)$.
\end{theorem}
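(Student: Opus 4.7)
The plan is to mimic the proof of Theorem~\ref{tho:1} essentially verbatim, only replacing the uniform prior $\mathbb{P}\{s\in\mathcal{S}_a\}=1/|\mathcal{V}|$ with the population-weighted prior $\mathbb{P}\{s\in\mathcal{S}_a\}=\rho(s)/\sum_{v\in\mathcal{V}}\rho(v)$. The likelihood $\mathbb{P}\{\mathcal{T}(s_a,h_a)\,|\,s\in\mathcal{S}_a\}$ depends only on the geometry of the graph and the set of policies under consideration, not on the prior distribution placed on the privacy-sensitive node; consequently the derivation in Theorem~\ref{tho:1} that reduced this likelihood to the indicator $\mathbbm{1}_{s\in\widehat{\mathcal{S}}}$ can be reused without change.

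First, I would apply Bayes' rule exactly as in~\eqref{eqn:prob:1}, but now keep the prior factor explicit since it is no longer constant in $s$:
\begin{align*}
\mathbb{P}\{s\in\mathcal{S}_a\,|\,\mathcal{T}(s_a,h_a)\}
\propto \mathbb{P}\{\mathcal{T}(s_a,h_a)\,|\,s\in\mathcal{S}_a\}\,\rho(s),
\end{align*}
where the normalising denominator $\sum_{v\in\mathcal{V}}\rho(v)$ has been absorbed into the proportionality. Next, I would invoke~\eqref{eqn:prob:2} from the proof of Theorem~\ref{tho:1}, which gives $\mathbb{P}\{\mathcal{T}(s_a,h_a)\,|\,s\in\mathcal{S}_a\}=\mathbbm{1}_{s\in\widehat{\mathcal{S}}}$. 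Substituting yields
\begin{align*}
\mathbb{P}\{s\in\mathcal{S}_a\,|\,\mathcal{T}(s_a,h_a)\}\propto \rho(s)\,\mathbbm{1}_{s\in\widehat{\mathcal{S}}}.
\end{align*}

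Finally, to pin down the constant of proportionality, I would use $\sum_{s\in\mathcal{V}}\mathbb{P}\{s\in\mathcal{S}_a\,|\,\mathcal{T}(s_a,h_a)\}=1$, which forces the normaliser to equal $\sum_{v\in\widehat{\mathcal{S}}}\rho(v)$. Hence
\begin{align*}
\mathbb{P}\{s\in\mathcal{S}_a\,|\,\mathcal{T}(s_a,h_a)\}=\frac{\rho(s)}{\sum_{v\in\widehat{\mathcal{S}}}\rho(v)}\,\mathbbm{1}_{s\in\widehat{\mathcal{S}}},
\end{align*}
and evaluating at $s=s_a$ (which lies in $\widehat{\mathcal{S}}$ by construction) recovers the claimed expression $\pi_a=\rho(s_a)/\sum_{v\in\widehat{\mathcal{S}}}\rho(v)$.

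There is no genuine obstacle here; the only subtlety is recognising that the characterisation of $\widehat{\mathcal{S}}$ from Section~\ref{sec:sym} is untouched by the prior, so the entire combinatorial content of Theorem~\ref{tho:1} carries over unchanged. The only work is in the final normalisation step, where the prior weights now survive into both numerator and denominator rather than cancelling as they did in the uniform case.
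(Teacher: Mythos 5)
Your proposal matches the paper's own proof essentially verbatim: Bayes' rule with the density-weighted prior $\rho(s)$, reuse of the indicator characterisation $\mathbb{P}\{\mathcal{T}(s_a,h_a)\,|\,s\in\mathcal{S}_a\}=\mathbbm{1}_{s\in\widehat{\mathcal{S}}}$ from Theorem~\ref{tho:1}, and normalisation over $\widehat{\mathcal{S}}$. The argument is correct and takes the same route as the paper.
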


\begin{proof} Similar to the proof of Theorem~\ref{tho:1}, it can be shown that
\begin{align*}
\mathbb{P}\{s\in\mathcal{S}_a\,|\,\mathcal{T}(s_a,h_a)\}
\propto&\mathbb{P}\{\mathcal{T}(s_a,h_a)\,|\,s\in\mathcal{S}_a\}\mathbb{P}\{s\in\mathcal{S}_a\}\nonumber\\
\propto&\mathbb{P}\{\mathcal{T}(s_a,h_a)\,|\,s\in\mathcal{S}_a\}\rho(s)\\
=&\mathbbm{1}_{s\in\widehat{\mathcal{S}}}\rho(s).
\end{align*}
As a result,
\begin{align*}
\mathbb{P}\{s\in\mathcal{S}_a\,|\,\mathcal{T}(s_a,h_a)\}=\frac{\rho(s)}{\sum_{v\in\widehat{\mathcal{S}}}\rho(v)}\mathbbm{1}_{s\in\widehat{\mathcal{S}}}.
\end{align*}
This concludes the proof.
\end{proof}

Following the result of Theorem~\ref{tho:3}, the problems that the agents need to solve can be adapted and the algorithm in Section~\ref{sec:sym} can be used to find an optimal privacy-preserving policy.

Note that a ``sensible'' agent should also put less emphasize on protecting its privacy if its privacy sensitive node is in a densely populated area. This is because, in densely populated areas, even if the observer pinpoints the location of an agent upto a node, there are many residential and commercial areas in the surrounding that makes identifying the physical location of the agent impossible. Such a behaviour can be reflected in the selection of the term $\gamma_a$ (if the agent makes such a decision).

\section{Numerical Example} \label{sec:numerical}
First, the results of Section~\ref{sec:sym} are demonstrated on random graphs. Consider a graph with $|\mathcal{V}|=1000$ nodes. The positions of the nodes are distributed uniformly inside a unit rectangle $[0,1]\times[0,1]$. If the Euclidean distance between two nodes is smaller than or equal to $0.1$, the nodes are assumed to be connected. This creates an undirected graph, which is referred to as a random geometric graph; see~\cite{penrose2003random} for more information.

\begin{figure}
  \centering
  \includegraphics[width=0.9\linewidth]{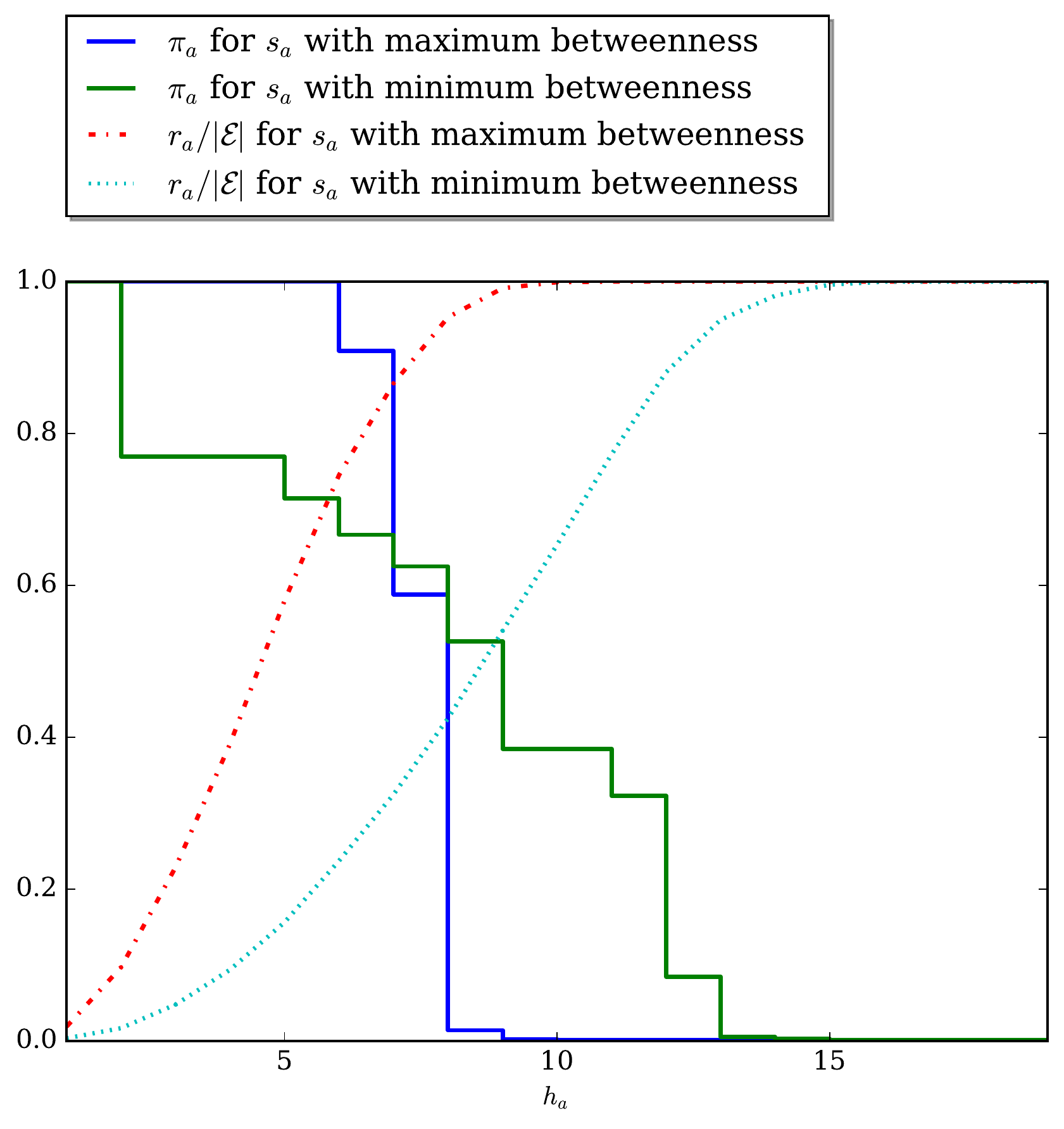}
  \caption{$\pi_a$ and $r_a$ as a function of $h_a$ for nodes with the maximum and the minimum betweenness averaged over ten random geometric graphs with $1000$ nodes in unit rectangle and connectivity radius of $0.1$.}
  \label{fig:test}
\end{figure}

The values of $\pi_a$ and $r_a$ (as function of $h_a$) are only illustrated for the nodes with the maximum and the minimum betweenness measures (as two extreme behaviours). The betweenness measure of centrality for a node is defined as the number of the shortest paths in the graph that pass through that node divided by the number of all the shortest paths in the graph. Intuitively, a node with a relatively large betweenness measure connect many nodes to each other. However, a node  with a small betweenness measure can be removed from the graph with no devastating consequences. Fig.~\ref{fig:test} shows $\pi_a$ and $r_a$ as a function of $h_a$ for nodes with the maximum and the minimum betweenness averaged over ten random geometric graphs. Based on the illustrated numerical example, it can be seen that, for the node with the maximum betweenness, a fairly large $h_a$ should be selected to reduce $\pi_a$ initially. This is because for these nodes the set $\mathcal{B}(s_a,h_a)$ is very large and thus $\widehat{\mathcal{S}}$ most often contains only a few nodes. In addition, by increasing $h_a$, $r_a$ rapidly increases. This is because the central nodes are often very close to all the nodes (as many shortest paths go through them). The reverse behaviour is observed for the node with the minimum betweenness.

Transportation networks are most often highly structured graphs as their design is not random (or at least one hopes) and follows careful consideration by the local governments. Therefore, in the next simulation, a scenario in the City of Melbourne. The goal is to ascertain the achievable levels of privacy for an agent starting his travel from the dark blue node\footnote{The interest in this node is not arbitrary as it pinpoints the position of a former residence of one of the authors.} in Fig.~\ref{fig:melb}. The possible levels of privacy infringement are depicted in Figure~\ref{fig:iman_privacy}. Evidently, with a fairly low $h_a$, $p_a$ can be reduced drastically. This is also achieved at a relatively low cost because $r_a$ is still small.

\begin{figure}
  \centering
  \includegraphics[width=0.9\linewidth]{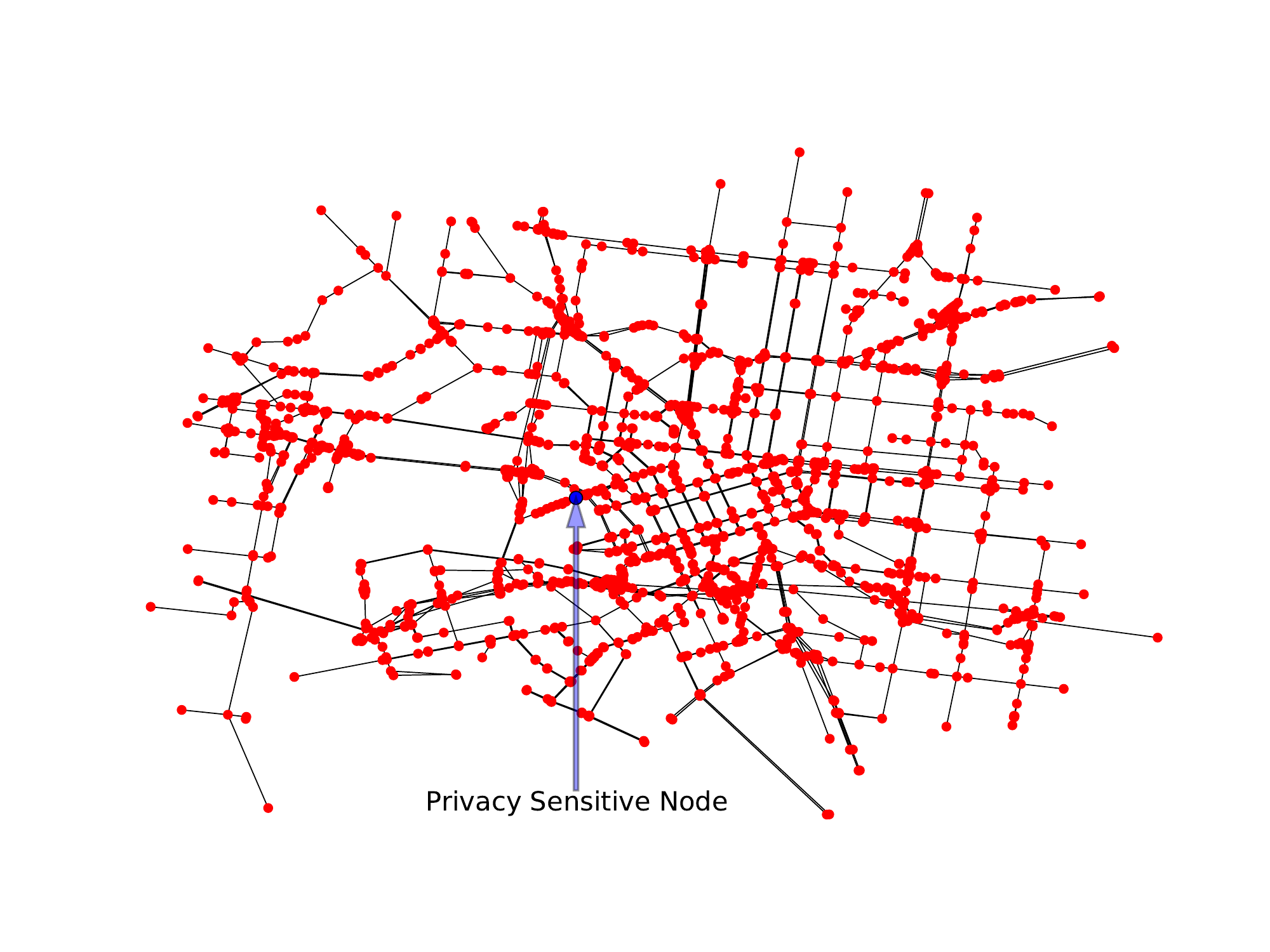}
  \caption{The roads in the City of Melbourne}
  \label{fig:melb}
\end{figure}
\begin{figure}
  \centering
  \includegraphics[width=0.9\linewidth]{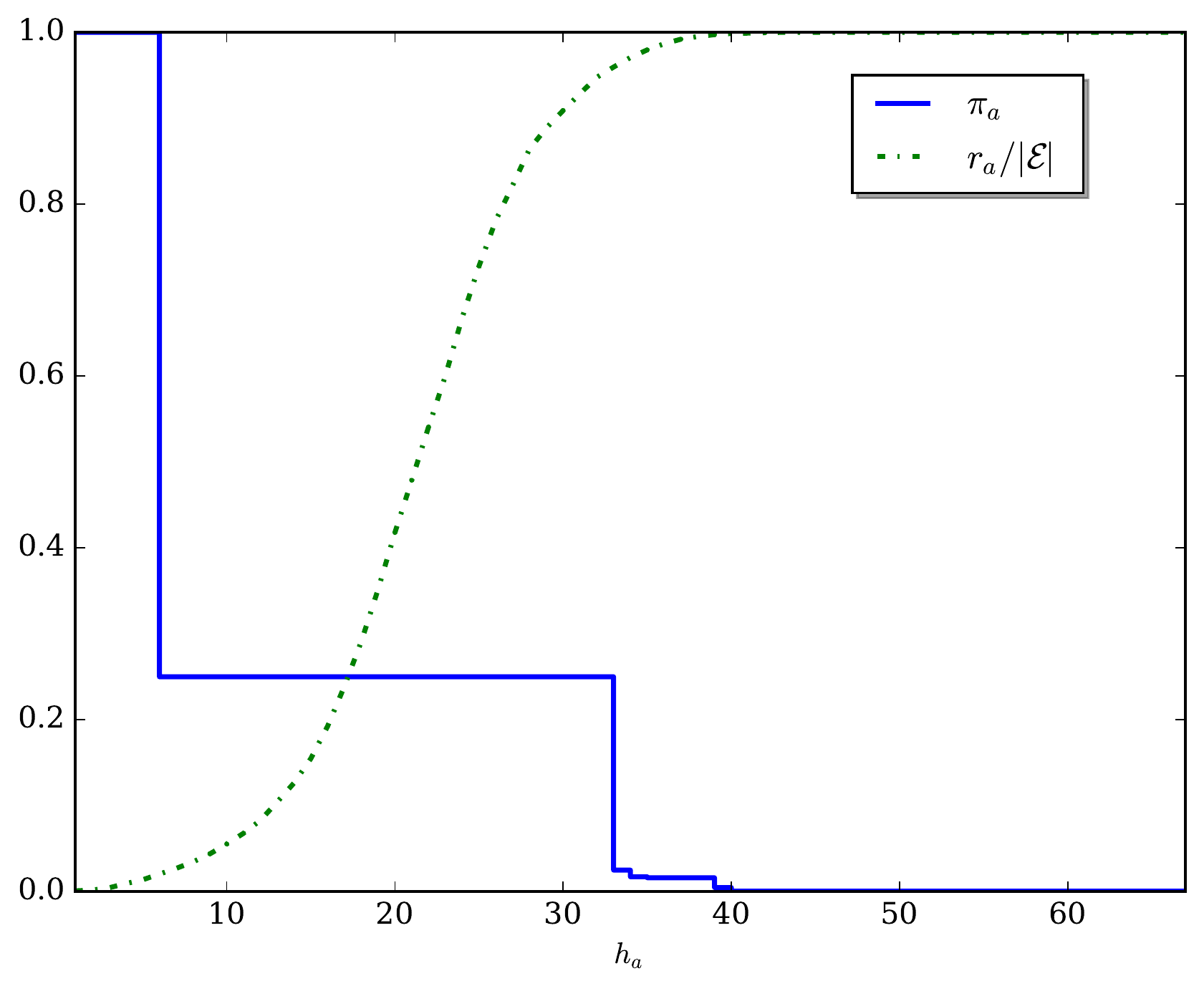}
  \caption{Different values of $\pi_a$ versus the ratio of the nodes where the location is not reported for different values of $h_a$.}
  \label{fig:iman_privacy}
\end{figure}

\section{Conclusions and Future Work} \label{sec:conclusions}
An algorithm for finding an optimal policy for preserving the privacy of the agents in a participatory-sensing scheme for traffic estimation is presented. The effect of the betweenness measure of centrality on the policy of the agents is numerically explored. In can be seen that, for the node with the maximum betweenness, a fairly large exclusion radius should be selected to enhance the privacy. In addition, by slightly increasing radius, the number of the nodes over which the agent stop its measurement transmission rapidly increases. The reverse behaviour is seen for the node with the minimum betweenness. {  Finally, note that if the agents want to hide their position on some edges (denoted by the \textit{privacy-sensitive edges}), the same results can be used. To do so, the definition of the graph representing the transportation system should be modified. Specifically, to find the optimal policy, the line graph (see~\cite[pp.\,71--82]{harary1969graph}) of the transportation network should be constructed. A line graph of a graph $\mathcal{G}=(\mathcal{V},\mathcal{E})$ is an undirected graph $L(\mathcal{G})$ with the vertex set $\mathcal{V}_{L(\mathcal{G})}=\mathcal{E}$ and the edge set
$\mathcal{E}_L(\mathcal{G})=\{(e,\bar{e})\in\mathcal{V}_{L(\mathcal{G})}\times \mathcal{V}_{L(\mathcal{G})}\,|\,i=\bar{i} \vee j=\bar{j}
 \mbox{ for } e=\{i,j\}, \bar{e}=\{\bar{i},\bar{j}\} \}.$ In the line graph of the transportation, the roads represent the nodes and two roads are connected to each other by an edge (in the line graph) if they intersect.}

The future work can focus on understanding the effect of determined policies using simulators for transportation systems to measure the quality of the participatory-sensing schemes. { Also financial incentives can be designed to improve the estimation quality in different areas of the city.}

\bibliographystyle{IEEEtran}
\bibliography{ref}
\end{document}